\numberwithin{equation}{section}
\theoremstyle{plain}
\newtheorem{theorem}{Theorem}
\numberwithin{theorem}{section}
\newtheorem{corollary}[theorem]{Corollary}
\newtheorem{lemma}[theorem]{Lemma}
\newtheorem{proposition}[theorem]{Proposition}
\theoremstyle{definition}
\newtheorem{example}{Example}
\theoremstyle{remark}
\newtheorem*{remark}{Remark}
\newenvironment{psmallmatrix}{\left(\begin{smallmatrix}}{\end{smallmatrix}\right)}
\newcommand{\Stab}{\mathrm{Stab}}
\newcommand{\R}{\mathbb{R}}
\newcommand{\Z}{\mathbb{Z}}
\newcommand{\N}{\mathbb{N}}
\newcommand{\C}{\mathbb{C}}
\renewcommand{\H}{\mathbb{H}}
\newcommand{\ord}{{\text {\rm ord}}}
\newcommand{\dv}{\operatorname{div}}
\newcommand{\re}{\operatorname{Re}}
\newcommand{\im}{\operatorname{Im}}
\newcommand{\SL}{{\text {\rm SL}}}
\begin{document}

\title[On Divisors of Modular Forms]{On Divisors of Modular Forms}

\author{Kathrin Bringmann, Ben Kane, Steffen L\"obrich, Ken Ono, and Larry Rolen}

\dedicatory{In celebration of Don Zagier's 65th birthday.}

\address{Mathematical Institute, University of Cologne, Weyertal 86-90, 50931 Cologne, Germany}
\email{kbringma@math.uni-koeln.de}

\address{Department of Mathematics, University of Hong Kong, Pokfluam, Hong Kong}
\email{bkane@maths.hku.hk}

\address{Mathematical Institute, University of Cologne, Weyertal 86-90, 50931 Cologne, Germany}
\email{steffen.loebrich@uni-koeln.de}

\address{Department of Mathematics and Computer Science, Emory University,
Atlanta, Georgia 30022} \email{ono@mathcs.emory.edu}

\address{Hamilton Mathematics Institute \& School of Mathematics, Trinity College, Dublin 2, Ireland}
\email{lrolen@maths.tcd.ie}

\thanks{The first and third author are supported by the Deutsche Forschungsgemeinschaft (DFG) Grant No. BR 4082/3-1. The second author was supported by grant project numbers 27300314, 17302515, and 17316416 of the Research Grants Council. The fourth author thanks the support of the NSF and the Asa Griggs Candler Fund.
}
\subjclass[2010]{11F03, 11F37, 11F30}

\begin{abstract} The {\it denominator formula} for the Monster Lie algebra is the product expansion for the modular function $j(z)-j(\tau)$ given in terms of the Hecke system of $\SL_2(\Z)$-modular functions $j_n(\tau)$. It is prominent in Zagier's seminal paper  on traces of singular moduli, and in the Duncan-Frenkel  work on Moonshine.  The formula
is equivalent to the description of the generating function for the $j_n(z)$ as a weight 2 modular form with a  pole at  $z$.
Although these results rely on the fact that $X_0(1)$ has
genus 0, here we obtain a generalization, framed in terms of polar harmonic Maass forms, for all of the $X_0(N)$ modular curves. We use these functions to study divisors of modular forms.
\end{abstract}

\maketitle

\section{Introduction and statement of results}
As usual, let $j(\tau)$ be the $\SL_2(\Z)$-modular function defined by
\begin{align*}
	j(\tau)=\sum_{n=-1}^{\infty}c(n)e^{2\pi i n \tau}
	&:=\frac{E_4(\tau)^3}{\Delta(\tau)}=e^{-2\pi i \tau}+744+196884e^{2\pi i \tau}+\cdots,
\end{align*}
where $E_k(\tau)
	:=1-\frac{2k}{B_k}\sum_{n=1}^{\infty}\sigma_{k-1}(n)e^{2\pi i n \tau}$
is the weight $k\in 2\N$ Eisenstein series, $\sigma_\ell(n):=\sum_{d|\ell} d^\ell$, $B_k$ is the $k$th Bernoulli number, and $\Delta(\tau):=(E_4(\tau)^3-E_6(\tau)^2)/1728$.
By Moonshine  (for example, see \cite{Moonshine}), $j(\tau)$ is the McKay-Thompson series for the identity (i.e., its coefficients are the graded dimensions of the Monster module $V^{\natural}$).
Moonshine also offers the striking infinite product
$$
j(z)-j(\tau)
=e^{-2\pi iz} \prod_{m>0, ~n\in \Z}
\left(1-e^{2\pi i m z}e^{2\pi i n \tau}\right)^{c(mn)},
$$
the {\it denominator formula} for the Monster Lie algebra.
This  formula is equivalent to the following identity of Asai, Kaneko, and Ninomiya (see Theorem 3 of \cite{AKN})
\begin{equation}\label{ZagierHFunction}
H_{z}(\tau):= \sum_{n=0}^{\infty} j_n(z)e^{2\pi i n \tau}=\frac{E_4(\tau)^2E_6(\tau)}{\Delta(\tau)} \frac{1}{j(\tau)-j(z)}=-\frac{1}{2\pi i} \frac{j'(\tau)}{j(\tau)-j(z)}.
\end{equation}
The functions $j_n(\tau)$ form a Hecke system. Namely, if we let
$j_0(\tau):=1$ and $j_1(\tau):=j(\tau)-744$, then the others are obtained
by applying the normalized Hecke operator $T(n)$
\begin{equation}\label{jnz}
j_n(\tau):=j_1(\tau) \ | \ T(n).
\end{equation}
\begin{remark}
The functions $H_{z}(\tau)$ and $j_n(\tau)$ played central roles in Zagier's \cite{Zagier} seminal paper on traces of singular moduli and the Duncan-Frenkel work \cite{DF} on the Moonshine Tower.
Carnahan \cite{Carnahan} has obtained similar denominator formulas for completely replicable modular functions.
\end{remark}

If $z\in \H$, then $H_{z}(\tau)$ is a weight $2$ meromorphic modular form on $\SL_2(\Z)$ with a single pole (modulo $\SL_2(\Z)$) at the point $z$.
Using these functions, modular form avatars for divisors of meromorphic modular forms were defined in \cite{BKO}.  More precisely, if $f(\tau)$
is a normalized weight $k$ meromorphic modular form on $\SL_2(\Z)$, then the {\it divisor modular form}\footnote{Note that this summation does not include the cusp $i\infty$.} is
\begin{equation}\label{div1}
f^{\dv}(\tau):= \sum_{z\in \SL_2(\Z)\backslash \H} e_{z}\ord_{z}(f) H_{z}(\tau),
\end{equation}
where
$e_{z}:=2/\Stab_z\left(\SL_2(\Z)\right).$
With $\Theta:=\frac{1}{2\pi i}\frac{d}{d\tau}$, Theorem~1 of \cite{BKO} asserts that
\begin{equation}\label{BKOdiv}
f^{\dv}(\tau)=-\frac{\Theta(f\left(\tau)\right)}{f(\tau)}+\frac{k E_2(\tau)}{12}.
\end{equation}

Although these results rely on the fact that $X_0(1)$ has genus 0, there is a natural extension for congruence subgroups. This extension requires polar harmonic Maass forms (see Section~\ref{polar}).  Here we consider the modular curves $X_0(N)$.  For $n\in\N$, we define a Hecke system of  $\Gamma_0(N)$ harmonic Maass functions $j_{N,n}(\tau)$ in Section~\ref{Thm1Proof} which generalize the $j_n(\tau)$. One key property that we note about the functions $j_{N,n}(\tau)$ is their growth in $n$-aspect, which is not easily described in terms of Fourier expansions. Instead, we use ``Ramanujan-like'' expansions, sums of the form
\begin{equation}\label{eqn:Ramanujanlike}
\sum_{\lambda\in \Lambda_{z}}\sum_{(c,d)\in S_{\lambda}} \frac{1}{\lambda^{k}} e\left(-\frac{n}{\lambda}r_{z}(c,d,k)\right) e^{\frac{2\pi n\mathrm{Im}(z)}{\lambda}},
\end{equation}
for some real numbers $r_{z}(c,d,k)$ (see \eqref{eqn:rcd}), $\Lambda_z$ a lattice in $\R$ (see \eqref{eqn:Lambdazdef}), and $S_{\lambda}$ the set of solutions to $Q_{z}(c,d)=\lambda$ for a certain positive-definite binary quadratic form $Q_z$ (see \eqref{eqn:Slambdadef}).

In Section~\ref{polar} we construct weight 2 polar harmonic Maass forms $H_{N,z}^*(\tau)$ which generalize  the $H_{z}(\tau)$.  We have two cases for the $H_{N,z}^*(\tau)$ which we consider separately.
The following theorem summarizes the essential properties of these functions when $z\in \H$.

\begin{theorem}\label{AKNGeneralization}
If $z\in \H$, then $H_{N,z}^*(\tau)$ is a weight 2 polar harmonic Maass form on $\Gamma_0(N)$ which vanishes at all cusps and  has a single simple pole at $z$. Moreover, the following are true:
\begin{enumerate}[leftmargin=*, label={\rm(\arabic*)}]
\item
 If $z\in \H$ and $\im(\tau) > \operatorname{max}\{\mathrm{Im}(z), \frac1{\mathrm{Im}(z)}\}$, then we have that
$$
H_{N,z}^*(\tau) =\frac{3}{\pi\left[\SL_2(\Z):\Gamma_0(N)\right] \im(\tau)}+\sum_{n=1}^{\infty} j_{N,n}(z)e^{2\pi i n \tau}.
$$

\item For $(N,n)=1$, we have
$j_{N,n}(\tau) =j_{N,1}(\tau) \ |\ T(n).$
\item For $n\mid N$, we have
$j_{N,n}(\tau)= j_{\frac{N}{n},1}(n\tau).$

\item  As $n\to \infty$, we have
\[
j_{N,n}\left(\tau\right) = \sum_{\substack{\lambda\in\Lambda_{\tau}\\ \lambda\leq n}}\sum_{(c,d)\in S_{\lambda}} e\left(-\frac{n}{\lambda} r_{\tau}(c,d)\right) e^{\frac{2\pi n \mathrm{Im}(\tau)}{\lambda}} + O_{\tau}(n).
\]
\end{enumerate}
\end{theorem}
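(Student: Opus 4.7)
My plan is to derive all four statements from the explicit Poincaré-series construction of $H_{N,z}^{*}(\tau)$ given in Section~\ref{polar}. The general shape of $H_{N,z}^{*}$ is that of a weight $2$ Maass–Poincaré series summed over $\Gamma_{\infty}\backslash\Gamma_{0}(N)$, with a Hecke-type regularization to compensate for the non-convergence in weight $2$. This construction immediately yields that $H_{N,z}^{*}(\tau)$ is a weight $2$ polar harmonic Maass form on $\Gamma_{0}(N)$ with a single simple pole at $z$ and trivial behavior at all cusps, together with the fact that its only non-holomorphic contribution comes from the weight $2$ correction term.

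For part~(1), I would show that in the range $\im(\tau) > \max\{\im(z),1/\im(z)\}$ the point $\tau$ is separated from every $\Gamma_{0}(N)$-translate of $z$, so the Poincaré series converges absolutely and admits a term-by-term Fourier expansion at $i\infty$. The leading $\frac{3}{\pi[\SL_{2}(\Z):\Gamma_{0}(N)]\im(\tau)}$ term is precisely the $1/\im(\tau)$ piece produced by the weight $2$ regularization, normalized by the index (equivalently, the ratio of covolumes of $\Gamma_{0}(N)$ and $\SL_{2}(\Z)$); the $j_{N,n}(z)$ are then, by definition, the holomorphic Fourier coefficients. For part~(2), with $(n,N)=1$ the Hecke operator $T(n)$ preserves polar harmonic Maass forms on $\Gamma_{0}(N)$, and on the Poincaré-series side acts by a standard double-coset manipulation: the identity $j_{N,n}(\tau)=j_{N,1}(\tau)\mid T(n)$ then follows by comparing Fourier coefficients in $\tau$ and exploiting the symmetry in the variables $z$ and $\tau$. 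For part~(3), when $n\mid N$ the function $j_{N/n,1}(n\tau)$ is automorphic for $\Gamma_{0}(N)$, has the correct analytic behavior at $z$, and vanishes at cusps with the right weight; uniqueness of polar harmonic Maass forms with prescribed principal part then forces equality with $j_{N,n}(\tau)$.

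The main analytic work is in part~(4). I would parametrize $\Gamma_{\infty}\backslash\Gamma_{0}(N)$ by coprime pairs $(c,d)$ with $N\mid c$, and expand each translate of the defining kernel in a Fourier series in $\tau$. The $n$-th Fourier coefficient then reorganizes into a double sum in which the denominator $|c\tau+d|^{2}$, evaluated at $\tau=z$, reduces to $Q_{z}(c,d)$; grouping terms by $\lambda=Q_{z}(c,d)$ and isolating the phase $r_{z}(c,d)$ produces exactly the Ramanujan-like sum \eqref{eqn:Ramanujanlike} restricted to $\lambda\leq n$. The hardest step is controlling the tail $\lambda>n$: for such $\lambda$ the exponential factor $e^{2\pi n\im(z)/\lambda}$ is bounded, but the inner sum over $S_{\lambda}$ must be estimated using the growth of the number of representations by the positive-definite form $Q_{z}$ together with the $1/\lambda^{k}$ decay, to conclude an overall contribution of size $O_{\tau}(n)$. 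This lattice-point and convergence analysis, rather than any of the algebraic identities, is where I expect the real obstacle to lie.
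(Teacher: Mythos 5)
Your plan for part (4) has a gap that is fatal as written. Up to normalization, $j_{N,n}$ is Niebur's Poincar\'e series $F_{N,-n}$ at $s=1$, and at $s=1$ the defining sum over $\Gamma_\infty\backslash\Gamma_0(N)$ is \emph{not} absolutely convergent (this is exactly what the paper's Remark (3) after the theorem is pointing at): a term with $Q_\tau(c,d)=\lambda>n$ contributes roughly $\sinh\left(2\pi n\im(\tau)/\lambda\right)\asymp n/\lambda$, the number of coprime pairs with $Q_\tau(c,d)\le X$ grows linearly in $X$ (and in weight $0$ the factor $1/\lambda^{k}$ in \eqref{eqn:Ramanujanlike} is just $1$), so the proposed tail $\sum_{\lambda>n}$ diverges like a harmonic series --- there is no convergent tail to estimate by lattice-point counting. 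The paper circumvents this with a hybrid argument: it keeps the termwise $\sinh$-representation only for $c\le\sqrt{n}/\im(\tau)$, and for $c>\sqrt{n}/\im(\tau)$ it switches to the Fourier--Bessel expansion of the analytic continuation from Proposition \ref{Niebur}, which it bounds via the Weil bound \eqref{eqn:WeilBound}, as in \eqref{eqn:expm=0} and \eqref{eqn:expm>0}. Some device of this kind (regularization, or trading the geometric sum for the spectral/Kloosterman side in the divergent range) is unavoidable, and your outline does not supply one.

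There is also a structural problem with part (1). In the paper the $j_{N,n}$ are \emph{not} defined as the Fourier coefficients of $H^*_{N,z}$; they are defined independently by $j_{N,n}=2\pi\sqrt{n}\,F_{N,-n}$ in \eqref{JNnDefinition}, and the content of (1) is the nontrivial matching of the $n$-th $\tau$-coefficient of the weight $2$ form (Proposition \ref{prop:Psiexp}: Kloosterman sums against $I$- and $J$-Bessel functions) with the value at $z$ of the weight $0$ form $F_{N,-n}$. Declaring (1) to be a definition merely shifts the work into (2)--(4), where your arguments are not carried through: for (2) ``comparing Fourier coefficients and exploiting the symmetry in $z$ and $\tau$'' is not an argument, whereas the paper computes the Hecke action directly on the seed $f_{n,s}$ of the Niebur series via \eqref{eqn:Hecke}; for (3) your uniqueness-of-principal-part argument is plausible for weight $0$ but incomplete --- you must still verify the behavior at \emph{all} cusps and match the constant terms (which are divisor-type Kloosterman sums), while the paper gets equality exactly by the explicit coset bijection $\left(\begin{smallmatrix}a&b\\c&d\end{smallmatrix}\right)\mapsto\left(\begin{smallmatrix}a&bn\\c/n&d\end{smallmatrix}\right)$ between $\Gamma_\infty\backslash\Gamma_0(N)$ and $\Gamma_\infty\backslash\Gamma_0(N/n)$. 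Finally, the weight $2$ series $P_{N,s}(\tau,z)$ at $s=0$ is itself only conditionally defined by Hecke's trick, so ``the Poincar\'e series converges absolutely'' in the stated range is not quite right either; the hypothesis on $\im(\tau)$ is needed for the rearranged Fourier expansion, not for the Poincar\'e series.
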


\smallskip
\noindent
{\it Five Remarks.}

\noindent
(1) In Theorem \ref{AKNGeneralization} (1), the inequality on $\im(\tau)$ is required for convergence.
\smallskip

\noindent
 (2)   For $N=1$, we have that
$H_{1,z}^*(\tau)= H_z(\tau)-E_{2}^*(\tau)$, where $E_2^*(\tau):=-\frac{3}{\pi \mathrm{Im}(\tau)}+E_{2}(\tau)$ is the usual weight 2 nonholomorphic Eisenstein series, and we have that $j_{1,n}(\tau)=j_n(\tau).$
 \smallskip

\noindent
(3) The sums  (\ref{eqn:Ramanujanlike}) were introduced by Hardy and Ramanujan \cite{HR3} (see also \cite{bby,bia}) in their study of  $1/E_6$.  These results have been generalized \cite{BKFC,BKFC2} to negative weight meromorphic modular forms.  Theorem~\ref{AKNGeneralization} (4) extends these results to weight 0 where the series are not absolutely convergent.

\noindent
(4) Theorem~\ref{AKNGeneralization} (4) gives asymptotics for $j_{N,n}(z)$ in $n$-aspect.
Individual $j_{N,n}(z)$  are easily computed.
By replacing $c>\sqrt{n}/y$ with $c>C$ in \eqref{eqn:expm=0} and \eqref{eqn:expm>0}, one finds that the terms decay like $C^{-1/2+\varepsilon}$ times a power of $n$.  For  $c\leq C$, the expansions in Proposition~\ref{Niebur} decay exponentially in $m$.

\smallskip
\noindent
(5) If $y\geq \im \left(Mz\right)$ for all $M\in\Gamma_0(N)$, then
\begin{equation}\label{JNnAsymp}
j_{N,n}(z)\approx e^{-2\pi inz}+\sum_{\substack{c\geq 1\\ N\mid c}} \sum_{\substack{d\in \Z\\ \gcd(c,d)=1\\ |cz+d|^2=1}} e\left(n\frac{d-a}{c}\right) e^{2\pi in\overline{z}}.
\end{equation}

\smallskip

The second case we consider are those $H_{N,\rho}^*(\tau)$ where $\rho$
is a cusp of $X_0(N)$. These functions are compatible with the $H_{N,z}^*(\tau)$ considered in Theorem~\ref{AKNGeneralization}.  More precisely, since $z\mapsto H_{N,z}^*(\tau)$ is continuous (even harmonic) and $\Gamma_0(N)$-invariant, it follows that
\begin{equation}\label{eqn:Hrhodef}
H_{N,\rho}^*(\tau):=\lim_{z\to \rho} H_{N,z}^*(\tau)
\end{equation}
is well-defined and only depends on the equivalence class of $\rho$.
The next result summarizes these functions' properties. We use the Kloosterman sums $K_{i\infty, \rho}(0,-n;c)$ of \eqref{Krhodef} and the weight $2$ harmonic Eisenstein series $E_{2,N,\rho}^*(\tau)$ with constant term $1$ at $\rho$ and vanishing at all other cusps.

\begin{theorem}\label{thm:HtauCusp}
We have that $H_{N,\rho}^*(\tau)=-E_{2,N,\rho}^*(\tau)$.  Moreover, the following are true:
\begin{enumerate}[leftmargin=*, label={\rm(\arabic*)}]
\item We have
\begin{align*}
	H_{N,\rho}^* (\tau)
	&\hphantom{:}= \frac{3}{\pi\left[\SL_2(\Z):\Gamma_0(N)\right]\im(\tau)}-\delta_{\rho,\infty}+\sum_{n=1}^\infty  j_{N,n}(\rho) e^{2\pi in\tau}, \qquad \text{ with}\\
	j_{N,n}(\rho)
	&:=\lim_{\tau\to \rho}j_{N,n}(\tau)=\frac{4\pi ^2 n}{\ell_\rho}\sum_{\substack{c\geq 1\\ N|c}}\frac{K_{i\infty, \rho}(0,-n;c)}{c^2},
\end{align*}
	where $\delta_{\rho,\infty}:=1$ if $\rho =i\infty$ and $0$ otherwise.
\item  For $\gcd(N,n)=1$, we have
$
j_{N,n}(\rho) =\lim_{\tau\to \rho}j_{N,1}(\tau) \ |\ T(n).
$
\item For $n\mid N$, we have
$
j_{N,n}(\rho)= \lim_{\tau\to \rho} j_{\frac{N}{n},1}(n\tau).
$
\end{enumerate}
\end{theorem}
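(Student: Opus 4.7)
The plan is to identify $H_{N,\rho}^*(\tau)$ with $-E_{2,N,\rho}^*(\tau)$ using the defining behavior of harmonic Eisenstein series at cusps, then derive the Fourier expansion from the Poincar\'e series and deduce the Hecke relations by continuity. First, because $z \mapsto H_{N,z}^*(\tau)$ is $\Gamma_0(N)$-invariant and (polar) harmonic, the limit $\lim_{z \to \rho} H_{N,z}^*(\tau)$ in \eqref{eqn:Hrhodef} is well-defined and only depends on the class of $\rho$. Since the pole of $H_{N,z}^*$ in $\tau$ sits exactly at $\tau = z$, as $z \to \rho$ this pole escapes to the cusp $\rho$, leaving a weight $2$ (non-polar) harmonic Maass form on $\Gamma_0(N)$. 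Tracking how the pole data at $z$ converts into cusp growth at $\rho$ shows that the limit has constant term $-1$ at $\rho$ and vanishes at all other cusps; by uniqueness of the weight $2$ harmonic Eisenstein series with these cusp profiles, the equality $H_{N,\rho}^* = -E_{2,N,\rho}^*$ follows.

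For the Fourier expansion in (1), I would work from the Poincar\'e series representation of $H_{N,z}^*(\tau)$ established in Section~\ref{polar} and unfold it at the cusp $i\infty$. The $c = 0$ coset in $\Gamma_\infty \backslash \Gamma_0(N)$ contributes the non-holomorphic term $\frac{3}{\pi [\SL_2(\Z):\Gamma_0(N)] \im(\tau)}$ together with a constant $-\delta_{\rho,\infty}$ after the $z \to \rho$ limit, the Kronecker delta arising because this contribution is non-zero only when $\rho$ is $\Gamma_0(N)$-equivalent to the cusp $i\infty$ around which we are expanding. For $c \geq 1$, the inner $d$-sum over residues $\bmod\, c$ becomes, after $z \to \rho$, a shifted exponential sum identified with the Kloosterman sum $K_{i\infty,\rho}(0,-n;c)$; the weight $2$ Fourier transform of $(c\tau + d)^{-2}$ produces the factor $4\pi^2 n / c^2$, and the cusp width $\ell_\rho$ enters via the standard normalization of the scaling matrix at $\rho$. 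Assembling these yields the stated formula for $j_{N,n}(\rho)$.

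Statements (2) and (3) are then immediate consequences of the analogous statements in Theorem~\ref{AKNGeneralization}: the identities $j_{N,n}(\tau) = j_{N,1}(\tau) \ |\ T(n)$ (for $\gcd(N,n) = 1$) and $j_{N,n}(\tau) = j_{N/n,1}(n\tau)$ (for $n \mid N$) hold for $\tau \in \HH$; taking $\tau \to \rho$, the left-hand side converges to $j_{N,n}(\rho)$ by (1), giving the claimed limit formulas. The main obstacle is the explicit Kloosterman sum computation in (1): the Fourier expansion from Theorem~\ref{AKNGeneralization}~(1) is valid only in the region $\im(\tau) > \max\{\im(z), 1/\im(z)\}$, which degenerates as $z \to \rho$, so one cannot just take the limit termwise. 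Instead, I would interchange the $z \to \rho$ limit with the coset sum via dominated estimates on the Poincar\'e series, or equivalently, apply a scaling matrix sending $\rho \to i\infty$ and compute the Fourier expansion of the conjugated function at $i\infty$, where the Kloosterman sum structure appears directly from the $d$-sum.
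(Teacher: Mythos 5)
Your outline for parts (2) and (3) matches the paper's (take $\tau\to\rho$ in Theorem \ref{AKNGeneralization} (2) and (3); the paper additionally notes that the termwise interchange of limit and sum is justified by the polynomial growth in $n$ of $j_{N,n}(\rho)$ from \eqref{eqn:jNnrho}). However, your argument for the headline identity $H_{N,\rho}^*=-E_{2,N,\rho}^*$ has a genuine gap. A weight $2$ harmonic Maass form on $\Gamma_0(N)$ with constant term $-1$ at $\rho$ and $0$ at all other cusps is determined only up to addition of an element of $S_2(\Gamma_0(N))$ --- this ambiguity is exactly why Theorem \ref{divN} is stated modulo $S_2(\Gamma_0(N))$. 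So for any level with nontrivial cusp forms (e.g.\ $N=11$) your appeal to ``uniqueness of the weight $2$ harmonic Eisenstein series with these cusp profiles'' does not pin down the function. The paper closes this by comparing complete Fourier expansions: Lemma \ref{lem:HNrhoexp} (quoted from \cite{BK}) gives every coefficient of $H_{N,\rho}^*$ as an explicit Kloosterman-sum expression, and Smart's expansion of $E_{2,N,\rho}^*$ then gives \eqref{HrhoE2rho} coefficient by coefficient.

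For the explicit formula for $j_{N,n}(\rho)$ in (1), you correctly identify the analytic obstacle (the region of validity of the expansion in Theorem \ref{AKNGeneralization} (1) degenerates as $z\to\rho$) but leave it unresolved, proposing only a dominated-convergence interchange on the weight-$2$ Poincar\'e series in $\tau$. The paper's route is different and avoids the interchange entirely: it never passes to the limit in the two-variable expansion, but instead expands the weight-$0$ Niebur Poincar\'e series $F_{N,-n,s}$ at the cusp $\rho$ (following Section 3.4 of \cite{Iwaniec}), where the constant Fourier mode is $\frac{c_{\rho,s}(n,0)}{2s-1}\im(z)^{1-s}$ and every nonzero mode carries a factor $K_{s-\frac12}\bigl(2\pi|m|\im(z)/\ell_\rho\bigr)$ decaying exponentially; analytic continuation to $s=1$ and the limit $\im(z)\to\infty$ then kill all but the constant term and produce \eqref{eqn:jNnrho}. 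Matching this with the coefficients in Lemma \ref{lem:HNrhoexp} further requires the symmetry $K_{i\infty,\rho}(0,-n;c)=K_{\rho,i\infty}(n,0;c)$, proved in the paper by the substitution $M\mapsto -M^{-1}$ in \eqref{Krhodef}; your sketch does not address this, and without it the Kloosterman sums appearing in your unfolding and in the statement are a priori different objects.
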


\noindent
{\it Two Remarks.}

\noindent
(1) Recall that the Fourier expansion in Theorem \ref{AKNGeneralization} (1) is not valid as $z\to i\infty$.
\smallskip

\noindent
(2) The $j_{N,n}(\rho)$ are explicit divisor sums, which we leave to the interested reader to verify. From a generalization of the Weil bound \eqref{eqn:WeilBound} one can obtain $j_{N,n}(\rho)=O(n^{\frac32})$.
\medskip

We turn to the task of extending (\ref{BKOdiv}) to generic $\Gamma_0(N)$.
Suppose that $f$ is a weight $k$ meromorphic modular form on $\Gamma_0(N)$.
In analogy with (\ref{div1}), we define the {\it divisor polar harmonic Maass form}
\begin{equation}\label{div2}
{f}^{\dv}(\tau):=\sum_{z\in X_0(N)} e_{N,z} \ord_z(f) H_{N,z}^*(\tau),
\end{equation}
where $e_{N,z}:=2/\#\Stab_z(\Gamma_0(N))$ (we let $e_{N,\rho}:=1$).
Generalizing (\ref{BKOdiv}), we show the following.

\begin{theorem}\label{divN}  If $S_2(\Gamma_0(N))$ denotes the space of weight 2 cusp forms on $\Gamma_0(N)$, then
$$
{f}^{\dv}(\tau) \equiv\frac{k}{4\pi \im(\tau)}-\frac{\Theta(f(\tau))}{f(\tau)} \pmod{S_2(\Gamma_0(N))}.
$$
\end{theorem}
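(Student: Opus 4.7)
The plan is to define
$$D(\tau) := f^{\dv}(\tau) + \frac{\Theta(f(\tau))}{f(\tau)} - \frac{k}{4\pi\im(\tau)},$$
and to show that $D \in S_2(\Gamma_0(N))$. First, I would verify that $D$ is a weight $2$ polar harmonic Maass form on $\Gamma_0(N)$: the sum $f^{\dv}$ is a finite linear combination of the $H_{N,z}^*$ built in Section~\ref{polar}, $\frac{k}{12}E_2-\frac{\Theta(f)}{f}$ is a classical weight $2$ meromorphic modular form on $\Gamma_0(N)$, and $\frac{k}{12}E_2-\frac{k}{4\pi\im(\tau)}=\frac{k}{12}E_2^*$ is weight $2$ harmonic. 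Combining these pieces, $D$ is weight $2$ polar harmonic.

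Next I would show that all singularities on $\H$ cancel in $D$. For each $z_0\in\H$, the Poincar\'e-series construction of $H_{N,z_0}^*$ from Section~\ref{polar} is normalized so that $e_{N,z_0}H_{N,z_0}^*(\tau)\sim-\tfrac{1}{2\pi i(\tau-z_0)}$ near $\tau=z_0$ (this is the same normalization that recovers $H_z(\tau)$ in the $N=1$ case, with the factor $e_{N,z_0}$ absorbing the elliptic stabilizers). Hence the contribution of $z_0$ to $f^{\dv}$ has principal part $-\tfrac{\ord_{z_0}(f)}{2\pi i(\tau-z_0)}$. Writing $f(\tau)=(\tau-z_0)^{\ord_{z_0}(f)}g(\tau)$ with $g$ holomorphic and nonvanishing near $z_0$ gives $\Theta(f)/f\sim+\tfrac{\ord_{z_0}(f)}{2\pi i(\tau-z_0)}$, so these cancel in $D$. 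For the global nonholomorphic term, Theorems~\ref{AKNGeneralization}(1) and \ref{thm:HtauCusp}(1) state that every $H_{N,z}^*$ contributes $\tfrac{3}{\pi\nu\im(\tau)}$, where $\nu:=[\SL_2(\Z):\Gamma_0(N)]$. The valence formula $\sum_{z\in X_0(N)}e_{N,z}\ord_z(f)=\tfrac{k\nu}{12}$ then implies the total nonholomorphic contribution to $f^{\dv}$ equals $\tfrac{k}{4\pi\im(\tau)}$, cancelling the $-\tfrac{k}{4\pi\im(\tau)}$ in $D$. Combined, $D$ is a holomorphic weight $2$ modular form on $\Gamma_0(N)$.

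The remaining, and main, step is to show that $D$ vanishes at every cusp $\rho'$. The forms $H_{N,z}^*$ with $z\in\H$ vanish at all cusps (Theorem~\ref{AKNGeneralization}), and by Theorem~\ref{thm:HtauCusp} each cusp-contribution is $e_{N,\rho}\ord_\rho(f)H_{N,\rho}^*=-\ord_\rho(f)E_{2,N,\rho}^*$, which has constant term $-\delta_{\rho,\rho'}\ord_\rho(f)$ at $\rho'$ by the defining property of $E_{2,N,\rho}^*$. Summing gives constant term $-\ord_{\rho'}(f)$ for $f^{\dv}$ at $\rho'$. To match this on the other side one computes the constant term of $-\Theta(f)/f$ at $\rho'$; the cleanest way is to use that $\Theta(f)/f-\tfrac{k}{12}E_2$ is a genuine weight $2$ modular form together with the explicit behaviour of $E_2^*$ under conjugation to $i\infty$ and the width factor $\ell_{\rho'}$, yielding $\ord_{\rho'}(f)$. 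Hence $D$ is a holomorphic weight $2$ modular form with vanishing constant term at every cusp, i.e.\ $D\in S_2(\Gamma_0(N))$. I expect the width-factor bookkeeping at cusps $\rho'\neq i\infty$ to be the most delicate ingredient; the interior-point cancellation and the valence-formula identity are formal.
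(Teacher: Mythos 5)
Your proposal is correct and takes essentially the same route as the paper's proof: match principal parts at points of $\H$ via \eqref{PP}, cancel the $1/\im(\tau)$ terms with the valence formula, and match constant terms at the cusps using $H_{N,\rho}^*=-E_{2,N,\rho}^*$, so that the difference lies in $S_2(\Gamma_0(N))$. You simply make explicit some bookkeeping (the quasimodular decomposition of $\Theta(f)/f$ and the cusp-width normalization) that the paper's very terse proof leaves implicit.
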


\smallskip
\noindent
{\it Three Remarks.}

\noindent
(1) The coefficient of $1/\im(\tau)$ in $H_{N,z}^*(\tau)$ is independent of $z$.  By the valence formula, summing over every element of $X_0(N)$ in the definition of $f^{\dv}(\tau)$ multiplies this constant by $\frac{k}{12}\left[\SL_2(\Z):\Gamma_0(N)\right]$, giving the nonholomorphic correction term on the right-hand side of Theorem \ref{divN}.

\smallskip
\noindent
(2) At first glance, definitions (\ref{div1}) and (\ref{div2}) might appear different for $N=1$.
Indeed, $H_{1,z}^*(\tau)= H_z(\tau)-E_{2}^*(\tau)$, and the sum in $(\ref{div2})$  includes the cusp $i\infty$
whereas $(\ref{div1})$ omits it.
The quasimodular Eisenstein series
$E_2(\tau)$ in (\ref{BKOdiv}) and the valence formula guarantees that they coincide.

\smallskip
\noindent
(3) The formula in Theorem \ref{divN} has already been obtained by Choi using a regularized inner product due to Petersson, but without relating the Fourier coefficients of $f^{\dv}$ to the polar harmonic Maass forms $H_{N,z}^*$ (see Theorem 1.4 of \cite{Choi}).

\medskip

Theorem~\ref{divN} can be used to numerically compute divisors of  meromorphic modular forms $f(\tau)$, which, in general, is a difficult task
 (for example, see \cite{Delaunay}). The idea is simple. The series $-\frac{\Theta\left(f(\tau)\right)}{f(\tau)}$ is the
logarithmic derivative of $f(\tau)$, and this fact converts the points $z\in \H$ in the divisor of $f(\tau)$ into simple poles. These can be identified by the asymptotic properties of the coefficients of $H_{N,z}^*(\tau)$ given in Theorem~\ref{AKNGeneralization}. This follows from Theorem~\ref{divN}
and the fact that coefficients of cusp forms satisfy Deligne's bound.
This method is based on the following immediate corollary to Theorems~\ref{AKNGeneralization}--\ref{divN}.

\begin{corollary}\label{asymptotics}
Suppose that $f(\tau)$ is a meromorphic modular form of weight $k$ on $\Gamma_0(N)$ whose divisor is not supported at cusps. Let $y_1$ be the largest imaginary part of any points in the divisor of $f(\tau)$ lying in $\mathbb H$. Then if $-\frac{\Theta(f(\tau))}{f(\tau)}=:
\sum_{n\gg-\infty}a(n)q^n$ $(q=e^{2\pi i\tau})$, we have that 
$$
y_1=\limsup_{n\rightarrow\infty}\frac{\log |a(n)|}{2\pi n}.
$$
\end{corollary}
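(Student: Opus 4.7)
The plan is to apply Theorem~\ref{divN} to extract the Fourier coefficients of $-\Theta(f)/f$, and then pin down their exponential growth rate using the asymptotic formula in Theorem~\ref{AKNGeneralization}(4). Rewriting Theorem~\ref{divN} as
\[
-\frac{\Theta(f)}{f} = f^{\dv}(\tau) - \frac{k}{4\pi\im(\tau)} - g(\tau)
\]
for some $g\in S_2(\Gamma_0(N))$, and substituting the expansions of $H_{N,z}^*$ and $H_{N,\rho}^*$ from Theorems~\ref{AKNGeneralization}(1) and \ref{thm:HtauCusp}(1) into \eqref{div2}, the $1/\im(\tau)$ contributions cancel (as observed in the first remark after Theorem~\ref{divN}), so that for each $n\geq 1$ the holomorphic Fourier coefficient of $-\Theta(f)/f$ equals
\[
a(n) = \sum_{z\in X_0(N)} e_{N,z}\,\ord_z(f)\, j_{N,n}(z) - b(n),
\]
where $b(n)$ is the $n$-th Fourier coefficient of $g$. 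Deligne's bound gives $|b(n)|\ll_\epsilon n^{1/2+\epsilon}$, and $j_{N,n}(\rho)=O(n^{3/2})$ at each cusp by the second remark after Theorem~\ref{thm:HtauCusp}; both are subexponential and therefore do not contribute to $\limsup \log|a(n)|/(2\pi n)$.

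For each interior point $z$ in the divisor, chosen as the representative of maximal imaginary part in its $\Gamma_0(N)$-orbit, Theorem~\ref{AKNGeneralization}(4) bounds $|j_{N,n}(z)|$ by a polynomial in $n$ times $e^{2\pi n \im(z)/\lambda_{\min}(z)}$, where $\lambda_{\min}(z)$ is the minimum of $Q_z$. For these representatives remark~(5) gives $\lambda_{\min}(z)=1$, so $|j_{N,n}(z)|\ll_z n^A e^{2\pi n \im(z)}$. Since the divisor is finite, summing over $z$ yields $|a(n)|\ll n^A e^{2\pi n y_1}$, and hence the upper bound $\limsup \log|a(n)|/(2\pi n)\leq y_1$.

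For the matching lower bound, I would group the leading contributions from the finitely many representatives $z$ with $\im(z)=y_1$; their sum has the form $e^{2\pi n y_1}\,P(n)$, where $P(n)$ is a finite exponential sum in $n$ arising from the dominant $e^{-2\pi i n z}$ terms together with the analogous contributions from the finitely many $(c,d)$ with $|cz+d|^2=1$ (cf.\ remark~(5)). Because at least one of the relevant $z$ has nonzero order in the divisor, $P$ is not identically zero, and a standard Weyl/Dirichlet equidistribution argument then guarantees $\limsup_n |P(n)|>0$. Combined with the upper bound and the subexponential error from $b(n)$ and the cusp terms, this gives $\limsup \log|a(n)|/(2\pi n)\geq y_1$, completing the proof.

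The main obstacle is the non-cancellation in the last step, i.e., verifying that $P(n)$ is not exponentially small. A cleaner self-contained route avoiding this entirely is to apply Cauchy--Hadamard directly to the $q$-expansion of $-\Theta(f)/f$: this function is meromorphic on $\mathbb{H}$ with simple poles exactly on the $\Gamma_0(N)$-orbits of the divisor (and no poles at the cusps by hypothesis), so its power series in $q$ has radius of convergence $e^{-2\pi y_1}$, yielding the formula for $y_1$ at once. The asymptotic approach above, however, is the one that fits the framework of this paper because it expresses $a(n)$ explicitly in terms of the coefficients $j_{N,n}(z)$ of the polar harmonic Maass forms $H_{N,z}^*$.
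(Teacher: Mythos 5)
Your primary argument is essentially the paper's intended proof: the corollary is presented there as an immediate consequence of Theorems~\ref{AKNGeneralization}--\ref{divN}, citing exactly the ingredients you assemble (the decomposition of $-\Theta(f)/f$ via \eqref{div2}, Deligne's bound for the cusp form, the polynomial bound on $j_{N,n}(\rho)$, and the asymptotics of Theorem~\ref{AKNGeneralization}(4)), and your upper bound $\limsup\leq y_1$ is carried out just as the paper envisions. Where you go beyond the paper is in taking the lower bound seriously: the paper does not address the possibility that the exponential sum $P(n)$ formed from the several divisor points at height $y_1$ (together with the arc terms in \eqref{JNnAsymp}) could cancel, whereas you name this obstacle explicitly. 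Your equidistribution suggestion can be made rigorous, e.g.\ via $\frac1M\sum_{n\leq M}|P(n)|^2\to\sum_j|c_j|^2$ by orthogonality, but this requires checking that the frequencies attached to distinct orbit representatives and to the arc terms are pairwise distinct (a secondary frequency $x+\frac{d-a}{c}$ of one point could a priori collide with the main frequency $-x'$ of another, and the orders $\ord_z(f)$ may have opposite signs), so some care is genuinely needed there. Your Cauchy--Hadamard alternative disposes of all of this cleanly: $-\Theta(f)/f$ is $1$-periodic and holomorphic on $\{\im(\tau)>y_1\}$, hence descends to a holomorphic function of $q$ on $|q|<e^{-2\pi y_1}$ (holomorphic at $q=0$ since its constant term is $-\ord_{i\infty}(f)$), with a genuine pole on the boundary circle because the supremum of $\im$ over each $\Gamma_0(N)$-orbit is attained; so the radius of convergence is exactly $e^{-2\pi y_1}$ and both inequalities follow at once. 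In short, the second route buys a complete and short proof, while the first (the paper's) buys the explicit expression of $a(n)$ in terms of the $j_{N,n}(z)$ that underlies the numerical applications; either is acceptable, but if you use the first you should supply the non-cancellation argument in full.
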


\smallskip
\noindent
{\it Two Remarks.}

\noindent
(1) We require $\limsup$ in Corollary~\ref{asymptotics} because the $a(n)$  can vanish on arithmetic progressions.

\smallskip
\noindent
 (2)  It would be interesting to develop a practical algorithm for numerically computing modular form divisors. The idea would be to
carefully peel away poles of $f^{\dv}(\tau)$ in descending order until one is left with a linear combination of functions $E_{N,\rho}^*(\tau).$

\begin{example}
For the Eisenstein series $E_4(\tau)$, we have
\[
-\frac{\Theta(E_4(\tau))}{E_4(\tau)}=-240q + 53280q^2 - 12288960q^3 + 2835808320q^4 - 654403831200q^5+\cdots.
\]
The sequence $\{b(n)\}_{n\geq1}=\{\log|a(n)|/(2\pi n)\}_{n\geq1}$ converges rapidly. Indeed,  $b(2)=0.866066794\dots$, and
$b(10)=0.866025404\dots$ matches the first 16 digits of the limiting value.
The divisor of $E_4(\tau)$ is supported on a zero at  $\omega:=(-1+\sqrt{-3})/2$.
By \eqref{JNnAsymp}, since $\omega$ lies on the unit circle (implying that the second term on the right-hand side of \eqref{JNnAsymp} appears) for large $n$, $a(n)$ should very nearly be
$\frac13\left(e^{-2\pi in\omega}+2 e^{2\pi in\overline{\omega}}\right)=e^{-2\pi in\omega}$,
which is very easily seen numerically.
\end{example}

\begin{example}
 We consider $f(\tau):=E_4(2\tau)+\frac{\eta^{16}(2\tau)}{\eta^8(\tau)}$, where $\eta(\tau)$ is Dedekind's eta-function. By the valence formula for $\Gamma_0(2)$, it has a single zero, say $z_0$, in $X_0(2)$. We find that
\[
-\frac{\Theta(f(\tau))}{f(\tau)}=
-q-495q^2+659q^3+113233q^4-261211q^5+
\cdots.
\]
After the first $3000$ terms the sequence $\log |a(n)|/(2\pi n)$ stabilizes
and offers $\im(z_0)\approx 0.4357$. As $f(\tau)$ has real coefficients and there is only one zero, $-\overline{z}_0$ must be $\Gamma_0(2)$-equivalent to $z_0$.
Thus, either $\mathrm{Re}(z)=0$, $1/2$ or $z$ lies on the arc $|2z-1|=1$. The first two cases are easily excluded by the sign patterns of $a(n)$, and the zero on the arc is easily approximated as $z_0\approx 0.2547 +0.4357i$.
\end{example}

This paper is organized as follows. In Section~\ref{polar} we construct the
weight $2$ polar harmonic Maass forms $H_{N,z}^*(\tau)$. In Section~\ref{Thm1Proof} we relate its Fourier
coefficients to the values of the weight 0 weak Maass forms at $\tau=z$.  In
other words, we prove Theorems~\ref{AKNGeneralization} and \ref{thm:HtauCusp}.  In
Section~\ref{ProofdivN} we show Theorem~\ref{divN}.

\section{Weight 2 Polar Harmonic Maass forms}\label{polar}

\subsection{ The \texorpdfstring{$H_{N,z}^*(\tau)$}{HNz*(tau)} when \texorpdfstring{$z\in \H$}{z in H}}

Define for $z, \tau\in \H$ and $s\in\C$ with $\re(s)>0$
\begin{equation}\label{Pns}
	P_{N, s}(\tau, z):=\sum_{M\in \Gamma_0(N)} \frac{\varphi_s\left(M\tau, z\right)}{j\left(M, \tau\right)^2|j\left(M, \tau\right)|^{2s}}
\end{equation}
with $j(\begin{psmallmatrix}
a&b\\
c&d
\end{psmallmatrix},\tau):=(c\tau+d)$ and 
\begin{equation*}
	\varphi_s(\tau, z):=\left(\mathrm{Im}(z)\right)^{1+s}(\tau-z)^{-1}(\tau-\overline{z})^{-1}\left|\tau-\overline{z}\right|^{-2s}.
\end{equation*}
These functions were introduced and investigated in the $z$-variable  in \cite{BK}, where it was shown that these are {\it polar harmonic Maass forms}.  These functions are allowed to have poles in the upper half plane instead of only at the cusps.  In this paper, we are interested in properties of $P_{N,s}(\tau, z)$ as functions of $\tau$.  A direct calculation shows that for $L\in\Gamma_0(N)$
$$
P_{N,s}\left(L\tau, z\right)=j\left(L, \tau\right)^2|j\left(L, \tau\right)|^{2s}P_{N, s}(\tau, z).
$$

In \cite{BK} it was shown, by a lengthy calculation, that the function $P_{N,s}(\tau, z)$ has an analytic continuation to $s=0$, which we denote by $\im (z)\Psi_{2,N}(\tau, z)$.
Let $\mathcal H_k(\Gamma_0(N))$ be the space of polar harmonic Maass forms with respect to $\Gamma_0(N)$.  Lemma 4.4 of \cite{BK} then states that $z\mapsto \im (z)\Psi_{2,N}(\tau,z)\in \mathcal{H}_0(\Gamma_0(N))$.  In the $\tau$ variable, these functions are also polar harmonic Maass forms, as the next proposition shows. For this, for $w\in\C$, let $e(w):=e^{2\pi i w}$, and
\begin{equation*}
	K(m,n;c):=\sum_{\substack{a, d\pmod c \\ ad\equiv 1\pmod c }} e\left(\frac{m d +na}{ c}\right).
\end{equation*}
Moreover, $I_k$ and $J_k$ denote the usual $I$- and $J$-Bessel functions. The following proposition can be obtained by a careful inspection of the proof of Theorem 3.1 of \cite{BK}.
\begin{proposition}\label{prop:Psiexp}
We have that $\tau\mapsto \mathrm{Im}(z)\Psi_{2,N}(\tau,z)\in\mathcal H_2(\Gamma_0(N))$. For $\im(\tau) > \operatorname{max}\{\mathrm{Im}(z), \frac{1}{\mathrm{Im}(z)}\}$, its Fourier expansion (in $\tau$) has the form 
\begin{align*}
	&y\Psi_{2,N}(\tau,z)
	=-\frac{6}{[\SL_2(\Z):\Gamma_0(N)]\im(\tau)}-2\pi\sum_{m\geq 1}\left(e^{-2\pi imz}-e^{-2\pi im\overline{z}}\right)e^{2\pi im\tau}\\
	&\ \ \quad-4\pi ^2 \sum_{m\geq 1}\sum_{\substack{n,c\geq 1 \\ N|c}}\sqrt{\frac{m}{n}}\frac{K(m,-n;c)}{c}I_1\left(\frac{4\pi\sqrt{mn}}{c}\right)e^{2\pi inz}e^{2\pi im\tau}\\
	&\ \ \quad-4\pi ^2 \sum_{m\geq 1}\sum_{\substack{n,c\geq 1\\ N|c}}\sqrt{\frac{m}{n}}\frac{K(m,n;c)}{c}J_1\left(\frac{4\pi\sqrt{mn}}{c}\right)e^{-2\pi in\overline{z}}e^{2\pi im\tau}
	-8\pi ^3\sum_{m\geq 1}m\sum_{\substack{c\geq 1\\ N|c}}\frac{K(m,0;c)}{c^2}e^{2\pi im\tau}.
\end{align*}
\end{proposition}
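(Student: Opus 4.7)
The plan is to proceed in parallel with the proof of Theorem 3.1 of \cite{BK}, where the analogous expansion was obtained in the $z$-variable, but to carry out the Fourier expansion instead in the $\tau$-variable. Three ingredients are needed: (i) weight $2$ modular transformation in $\tau$ after continuing to $s=0$; (ii) the explicit Fourier expansion; (iii) the harmonic property at $s=0$.

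For (i), the formal weight-$(2+2s)$ transformation in $\tau$ follows directly from the definition of $P_{N,s}(\tau,z)$ by reindexing the sum via left translation by $L\in\Gamma_0(N)$, using the cocycle identity $j(ML,\tau)=j(M,L\tau)j(L,\tau)$. Since the analytic continuation $\im(z)\Psi_{2,N}(\tau,z)$ of $P_{N,s}(\tau,z)$ at $s=0$ is the value of a modular function of weight $2+2s$ at $s=0$, the limit is genuinely weight $2$ in $\tau$, establishing the modular transformation.

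For (ii), I would compute the Fourier expansion term by term by splitting the sum over $M\in\Gamma_0(N)$ according to the bottom row $(c,d)$. The contribution from $c=0$ (i.e.\ the translations $\tau\mapsto \tau+n$) is explicit and produces, after analytic continuation of the resulting real-analytic Eisenstein-type series in $s$, the constant term $-6/([\SL_2(\Z):\Gamma_0(N)]\im(\tau))$. For $c>0$ with $N\mid c$, I would write $M\tau=\tfrac{a}{c}-\tfrac{1}{c^2(\tau+d/c)}$, substitute into $\varphi_s(M\tau,z)$, Fourier-expand in $\tau$ using the $d\pmod c$ sum to assemble the Kloosterman sums $K(m,n;c)$ and $K(m,0;c)$, and evaluate the remaining integrals of the form $\int_{\R}(u-iy)^{-2}|u-iy|^{-2s}\varphi_s(\cdot)\,e^{-2\pi imu}du$ via the standard formula giving $I$- and $J$-Bessel functions (splitting according to the sign of the imaginary shift $\pm\im(z)$ coming from the two linear factors in $\varphi_s$). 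The two Bessel sums with $e^{2\pi inz}$ (producing $I_1$) and $e^{-2\pi in\overline z}$ (producing $J_1$) correspond precisely to residues at $\tau=z$ and $\tau=\overline{z}$ of the kernel. The hypothesis $\im(\tau)>\max\{\im(z),1/\im(z)\}$ ensures absolute convergence of the final Fourier series and justifies the term-by-term Fourier expansion.

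For (iii), harmonicity follows by observing that the weight-$2$ hyperbolic Laplacian $\Delta_2$ annihilates the kernel $\varphi_0(\tau,z)$ as a function of $\tau$ away from $\tau\in\Gamma_0(N)z$, since $\varphi_0(\tau,z)=\im(z)(\tau-z)^{-1}(\tau-\overline{z})^{-1}$ is meromorphic in $\tau$ and hence annihilated by $\overline\partial_\tau$ up to a distribution supported at the poles; the standard decomposition $\Delta_2=-4\im(\tau)^2\partial_{\overline\tau}\partial_\tau+2i\im(\tau)\partial_{\overline\tau}$ in weight $2$ then gives $\Delta_2\varphi_0=0$ away from $\tau=z,\overline z$. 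Combined with the modular transformation and the explicit polar structure (a simple pole at each $\Gamma_0(N)$-orbit point of $z$, visible from the $M=\id$ summand), this yields $\im(z)\Psi_{2,N}(\cdot,z)\in\mathcal H_2(\Gamma_0(N))$.

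The main obstacle is item (iii) combined with the analytic continuation: the series at $s=0$ is only conditionally convergent, so interchanging the limit $s\to 0$ with the Fourier expansion must be justified. Following \cite{BK}, this is done by first performing the unfolding and the Bessel integrals for $\re(s)>0$ where everything converges absolutely, identifying the resulting series in $s$ as one whose analytic continuation to $s=0$ is known (e.g.\ via a Selberg-style meromorphic continuation or via the spectral decomposition used in \cite{BK}), and only then reading off the Fourier expansion at $s=0$. Careful bookkeeping of the constants—in particular producing exactly $-6/([\SL_2(\Z):\Gamma_0(N)]\im(\tau))$ and the correct power $\sqrt{m/n}$ in the Kloosterman sum terms—requires tracking the normalizations from the substitution $\tau\mapsto \tau+d/c$ through the Bessel integrals.
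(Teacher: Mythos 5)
Your outline follows essentially the same route as the paper, whose proof of this proposition consists of deferring to a careful inspection of the proof of Theorem 3.1 of \cite{BK}: unfold over bottom rows $(c,d)$, assemble Kloosterman sums from the $d\pmod c$ average, evaluate the resulting Bessel-type integrals for $\re(s)>0$, and continue to $s=0$ via the Hecke trick, with harmonicity and the interchange of limits justified before specializing $s$. One small correction: the nonholomorphic constant $-6/\left([\SL_2(\Z):\Gamma_0(N)]\im(\tau)\right)$ does not come from the $c=0$ terms --- those converge absolutely at $s=0$ and yield precisely the holomorphic Lipschitz part $-2\pi\sum_{m\geq 1}\left(e^{-2\pi imz}-e^{-2\pi im\overline{z}}\right)e^{2\pi im\tau}$ after partial fractions --- but rather from the Hecke-trick continuation of the $m=0$ Fourier coefficient of the $c>0$ terms, exactly as in the classical continuation producing $E_2^*$; this misattribution would self-correct upon carrying out the computation and does not affect the validity of the plan.
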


We then set
\begin{equation}\label{eqn:Hdef}
H_{N,z}^*(\tau):=-\frac{\im (z)}{2\pi}\Psi_{2,N}(\tau,z).
\end{equation}

\begin{remark}
We have, as $\tau\to z$,
\begin{equation}\label{PP}
H^*_{N,z}(\tau)=\frac{1}{2\pi i e_{N, z}}\frac{1}{\tau-z}+O(1).
\end{equation}
\end{remark}

\subsection{The \texorpdfstring{$H_{N,z}^*(\tau)$}{HNz*(tau)} for cusps} \label{sec:harmonic}

  We require the Fourier expansion of the functions $H_{N,\rho}^*(\tau)$ defined in \eqref{eqn:Hrhodef}.  For any cusp $\rho$ of $\Gamma_0(N)$, denote by $\ell_\rho$ the cusp width and let $M_\rho$ be a matrix in $\SL_2(\Z)$ with $\rho=M_\rho i \infty$. For two cusps $\mathfrak{a},\mathfrak{b}$ of $\Gamma_0(N)$, the generalized Kloosterman sums are
\begin{equation}\label{Krhodef}
	K_{\mathfrak{a}, \mathfrak{b}}(m,n;c):=\sum_{\left.\left.\left(\begin{smallmatrix} a & b \\ c & d\end{smallmatrix}\right) \in\Gamma_\infty^{\ell_\mathfrak{a}}\right\backslash M_\mathfrak{a}^{-1}\Gamma_0(N)M_\mathfrak{b}\right/\Gamma_\infty^{\ell_\mathfrak{b}}} e\left(\frac{m d}{\ell_\mathfrak{b} c} +\frac{na}{\ell_\mathfrak{a} c}\right)
\end{equation}
with $\Gamma_\infty:=\left\{\pm\begin{psmallmatrix}
	1 & n\\
	0 & 1
\end{psmallmatrix}:n\in\Z\right\}$. 
Note that we have $K_{i\infty,i\infty}(m,n;c)=K(m,n;c)$.

\begin{lemma}[Lemma 5.4 of \cite{BK}]\label{lem:HNrhoexp}
We have
\[
H_{N,\rho}^*(\tau) = \frac{3}{\pi \left[\SL_2(\Z):\Gamma_0(N)\right] \im(\tau)}  -\delta_{\rho,\infty} +\frac{4\pi ^2}{\ell_{\rho}}\sum_{n\geq 1} n \sum_{c\geq 1} \frac{K_{\rho, i\infty}(n,0;c)}{c^2} e^{2\pi i n \tau}.
\]
\end{lemma}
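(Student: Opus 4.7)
The plan is to identify $H^*_{N,\rho}(\tau)$ with the harmonic Eisenstein series $-E^*_{2,N,\rho}(\tau)$ provided by Theorem~\ref{thm:HtauCusp} and then compute its Fourier expansion at $i\infty$ via the classical unfolding procedure for Poincare series. This reduces the problem to a direct computation for a standard object.

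First I would realize $E^*_{2,N,\rho}(\tau)$ as the analytic continuation to $s=0$ of
\[
E_{N,\rho,s}(\tau) := \frac{1}{\ell_\rho}\sum_{M\in\Gamma_\infty^{\ell_\rho}\bs M_\rho^{-1}\Gamma_0(N)}\frac{\im(M\tau)^{s}}{j(M,\tau)^{2}\,|j(M,\tau)|^{2s}},
\]
which by construction has constant term $1$ at $\rho$ and vanishes at every other cusp. Sorting coset representatives by the bottom row $(c,d)$ and unfolding against $\{e^{2\pi in\tau}\}_{n\in\Z}$ splits the computation into two parts. The $c=0$ orbit appears only when $\rho\sim i\infty$ in $\Gamma_0(N)$, and the Laurent expansion at $s=0$ of its contribution isolates the $\delta_{\rho,i\infty}$ constant together with a non-holomorphic term $-C/\im(\tau)$; the constant $C=\tfrac{3}{\pi[\SL_2(\Z):\Gamma_0(N)]}$ is then fixed by the hyperbolic-volume identity $\vol(\Gamma_0(N)\bs\H)=\tfrac{\pi}{3}[\SL_2(\Z):\Gamma_0(N)]$. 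For $c\geq 1$ (necessarily $N\mid c$), summing over $d\bmod c$ produces the generalized Kloosterman sum $K_{\rho,i\infty}(n,0;c)$ of \eqref{Krhodef}, and the standard Fourier-integral identity
\[
\int_{\R}\frac{e^{-2\pi inx}}{(\tau+x)^{2}}\,dx \;=\; -4\pi^{2}n\,e^{2\pi in\tau}\qquad(n\geq 1)
\]
yields the $n$-th coefficient $-\tfrac{4\pi^{2}n}{\ell_{\rho}}\sum_{N\mid c}c^{-2}K_{\rho,i\infty}(n,0;c)$. Multiplying by the $-1$ from $H^*_{N,\rho}=-E^*_{2,N,\rho}$ gives the claimed expansion, once one verifies that $-E^*_{2,N,\rho}$ indeed has no Fourier terms of negative index at $i\infty$ beyond those listed (this is automatic from the harmonic Maass form structure and the vanishing at cusps other than $\rho$ baked into the Poincare series above).

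The main obstacle is the analytic continuation at $s=0$, where the Poincare series is not absolutely convergent. One must regularize it and extract the finite part carefully in order to pin down both the $-\delta_{\rho,i\infty}$ contribution and the exact constant $\tfrac{3}{\pi[\SL_2(\Z):\Gamma_0(N)]}$ of the non-holomorphic piece; once these are in hand, the $c\geq 1$ contributions are immediate from standard Kloosterman bounds and the Fourier-integral identity above.
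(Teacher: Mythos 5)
Your proof has a circularity problem within the logical structure of this paper. You take as your starting point the identification $H_{N,\rho}^*=-E_{2,N,\rho}^*$ ``provided by Theorem~\ref{thm:HtauCusp}'', but that identification (stated as \eqref{HrhoE2rho} and as the opening claim of Theorem~\ref{thm:HtauCusp}) is itself \emph{derived from} Lemma~\ref{lem:HNrhoexp}: the paper obtains it by comparing the Fourier expansion asserted in this lemma with Smart's expansion of $E_{2,N,\rho}^*$. The actual content of the lemma is a statement about $H_{N,\rho}^*(\tau):=\lim_{z\to\rho}H_{N,z}^*(\tau)$, where $H_{N,z}^*(\tau)=-\frac{\im(z)}{2\pi}\Psi_{2,N}(\tau,z)$ is the analytically continued Poincar\'e series of \eqref{Pns}; any proof must engage with that definition, e.g.\ by justifying the termwise limit $z\to\rho$ in the expansion of Proposition~\ref{prop:Psiexp} (or by unfolding $P_{N,s}(\tau,z)$ as $z$ tends to the cusp), which is where the real work lies and which your argument never touches. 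What you do compute --- the unfolding of a weight-$2$ Eisenstein series attached to $\rho$ --- essentially reproves Smart's theorem; combined with an \emph{independent} proof of $H_{N,\rho}^*=-E_{2,N,\rho}^*$ it would suffice, but you have supplied no such independent proof.

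There is also an error in the computation you do sketch: you attribute both the constant $-\delta_{\rho,i\infty}$ and the non-holomorphic term $-\frac{3}{\pi[\SL_2(\Z):\Gamma_0(N)]\im(\tau)}$ to the $c=0$ orbit, which ``appears only when $\rho\sim i\infty$.'' That cannot be right, since the $1/\im(\tau)$ term in the lemma is present for \emph{every} cusp $\rho$ while $-\delta_{\rho,\infty}$ is not. The $c=0$ orbit contributes only the constant $\delta_{\rho,i\infty}$; the $1/\im(\tau)$ term arises from the Hecke-trick continuation at $s=0$ of the $n=0$ Fourier coefficient of the $c\geq 1$ terms (the finite part of $\sum_{c}c^{-2-2s}\int_{\R}(\tau+x)^{-2}|\tau+x|^{-2s}dx$). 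This is fixable, but as written the regularization step you flag as ``the main obstacle'' is set up incorrectly.
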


The Fourier expansions in Lemma \ref{lem:HNrhoexp} yield a connection with the harmonic weight $2$ Eisenstein series $E_{2,N,\rho}^*(\tau)$ for $\Gamma_0(N)$.  For $\re (s) >0$, define
\begin{equation}\label{Hecke}
E_{2, N,\rho,s}^*(\tau):=\sum_{M\in \Gamma_\rho\setminus \Gamma_0(N)} j\left(M_\rho M, \tau\right)^{-2}\left|j\left(M_\rho M, \tau\right)\right|^{-2s}.
\end{equation}
Using the Hecke trick, it is well-known (cf. Satz 6 of \cite{Hecke}) that $E_{2,N,\rho,s}^*(\tau)$ has an analytic continuation to $s=0$, denoted by $E_{2,N,\rho}^*(\tau)$. From the Fourier expansion given in Theorem 1 of \cite{Smart}, we see
\begin{equation}\label{HrhoE2rho}
H_{N,\rho}^*(\tau)=-E_{2,N,\rho}^*(\tau).
\end{equation}



\section{The \texorpdfstring{$j_{N,n}(z)$}{jNn(z)} and the proofs of Theorems \ref{AKNGeneralization} and \ref{thm:HtauCusp}}\label{Thm1Proof}
\subsection{The \texorpdfstring{$j_{N,n}(z)$}{jNn(z)}}

The functions $j_{N,n}(z)$ are constructed as analytic continuations of Niebur's Poincar\'e series \cite{Niebur}. To be more precise, set for $n\in \N$ and $\re(s)>1$
\begin{align*}
	F_{N, -n, s}(z)
	:=\sum_{M\in\Gamma_\infty \backslash\Gamma_0(N)}e\left(-n\re (Mz)\right)\im(Mz)^\frac12 I_{s-\frac12}\left(2\pi n \im(Mz)\right).
\end{align*}
These functions are {\it weak Maass forms} of weight $0$; instead of being annihilated by $\Delta_0$, they have eigenvalue $s(1-s)$.
 To obtain an analytic continuation to $s=1$, one computes the Fourier expansion of $F_{N,-n,s}(z)$ and uses
$$
\lim_{s\to 1} y^{\frac12}I_{s-\frac12}\left(2\pi ny\right)=y^{\frac12}I_{\frac12}\left(2\pi ny\right)=\frac{1}{\pi\sqrt{n}}\sinh\left(2\pi ny\right)=\frac{e^{2\pi ny}-e^{-2\pi ny}}{2\pi\sqrt{n}}.
$$
\rm
\begin{proposition}[Theorem 1 of \cite{Niebur}]\label{Niebur}
	The function $F_{N, -n, s}(z)$ has an analytic continuation $F_{N, -n}(z)$ to $s=1$ and $F_{N, -n}(z)\in \mathcal H_0 (\Gamma_0(N))$. We have the Fourier expansion
	\begin{align*}
		F_{N, -n}(z)=\frac{e^{-2\pi inz}-e^{-2\pi in\overline{z}}}{2\pi\sqrt{n}}+c_N(n,0)+\sum_{m\ge1}\left(c_N(n,m)e^{2\pi imz}+c_N(n,-m)e^{-2\pi im\overline{z}}\right),
	\end{align*}
	where the coefficients are given by
$$
c_N(n,m):= \sum_{\substack{c\geq 1 \\ N|c}}\frac{K(m,-n;c)}{c}\times\begin{cases}\frac{1}{\sqrt{m}} I_1\left(\frac{4\pi\sqrt{mn}}{c}\right) & \text{if $m> 0$,}\\
\frac{2\pi \sqrt{n}}{c}& \text{if $m= 0$,}\\
\frac{1}{\sqrt{|m|}} J_1\left(\frac{4\pi\sqrt{|m|n}}{c}\right) & \text{if $m< 0$.}
\end{cases}
$$
\end{proposition}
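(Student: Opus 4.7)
The plan is to follow Niebur's original approach: build the Fourier expansion for $\re(s)>1$ where absolute convergence is unproblematic, then analytically continue term-by-term to $s=1$ using Weil's bound on Kloosterman sums.

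First I would verify absolute and locally uniform convergence of the defining Poincar\'e series for $\re(s)>1$. Since $\im(Mz)=y|j(M,z)|^{-2}$ and $I_{s-\frac12}(t)=O(t^{s-\frac12})$ as $t\to 0$, the tails of the series are dominated by a constant multiple of $\sum_{(c,d)}|cz+d|^{-2\re(s)-1}$, which converges for $\re(s)>\frac12$. Next, the seed $e(-nx)y^{\frac12}I_{s-\frac12}(2\pi ny)$ is an eigenfunction of $\Delta_0=-y^2(\partial_x^2+\partial_y^2)$ with eigenvalue $s(1-s)$ (this is the Bessel equation rewritten in hyperbolic coordinates), a property inherited by the $\Gamma_0(N)$-average $F_{N,-n,s}$. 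Weight-$0$ automorphy under $\Gamma_0(N)$ is built into the construction.

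Next I would compute the Fourier expansion in $x=\re(z)$ by the standard Bruhat-type unfolding. The identity double coset contributes the leading term $e^{-2\pi inx}y^{\frac12}I_{s-\frac12}(2\pi ny)$. For the cosets with $c\neq 0$, I restrict to $c>0$ with $N\mid c$, parametrize representatives by $a\pmod c$ with $\gcd(a,c)=1$ and $d\equiv a^{-1}\pmod c$, write $Mz=\frac{a}{c}-\frac{1}{c^2(z+d/c)}$, and Poisson-sum in the $d$-shifts. The $a$-sum at each $c$ collapses to the Kloosterman sum $K(m,-n;c)$, while the remaining integral
\[
\int_{\R}e(-m\xi)\,\im\!\left(\tfrac{-1}{c^2(\xi+iy)}\right)^{\!\frac12}\!I_{s-\frac12}\!\left(2\pi n\,\im\!\left(\tfrac{-1}{c^2(\xi+iy)}\right)\right)d\xi
\]
is a classical Bessel transform: by entries in Gradshteyn--Ryzhik one obtains $I$-Bessel values for $m>0$, $J$-Bessel values for $m<0$, and a polynomial-in-$y$ expression for $m=0$, each appearing with the expected factor of $c^{-1}$.

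Then I would analytically continue to $s=1$ term by term. On the constant-in-$x$ piece, the identity
\[
\lim_{s\to1}y^{\frac12}I_{s-\frac12}(2\pi ny)=\frac{e^{2\pi ny}-e^{-2\pi ny}}{2\pi\sqrt{n}}
\]
produces the leading block. For each nonzero frequency $m$, the $c$-series takes the shape $\sum_{N\mid c}K(m,-n;c)c^{-1}\cdot(\text{Bessel})(4\pi\sqrt{|m|n}/c)$, matching the formula. The main obstacle, and essentially the only nontrivial analytic point, is establishing the convergence of these $c$-sums at $s=1$: using the Weil bound $|K(m,-n;c)|\ll_\varepsilon c^{\frac12+\varepsilon}\gcd(m,n,c)^{\frac12}$ together with $I_1(t),J_1(t)=O(\min(t,t^{-1/2}))$, each fixed-$m$ series converges absolutely, and a dominated-convergence/Phragm\'en--Lindel\"of argument justifies interchanging the limit $s\to1$ with the $c$-summation.

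Finally, harmonicity of $F_{N,-n}$ is automatic: the eigenvalue $s(1-s)$ vanishes at $s=1$, so $\Delta_0 F_{N,-n}=0$. The Fourier expansion exhibits at most linear growth at $i\infty$, and conjugating by scaling matrices $M_\rho$ gives the analogous behavior at every other cusp of $\Gamma_0(N)$. Together with the already-established $\Gamma_0(N)$-invariance, this places $F_{N,-n}\in\mathcal{H}_0(\Gamma_0(N))$, completing the proof.
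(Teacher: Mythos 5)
The paper offers no proof of this proposition---it is quoted as Theorem 1 of Niebur's paper---so the only comparison available is with Niebur's original argument, which your outline follows: unfold the Poincar\'e series, Poisson-sum in $d$, evaluate the resulting Bessel transforms, and continue the Kloosterman--Bessel series to $s=1$ via the Weil bound. The architecture is correct, but two concrete points are wrong as written. First, the convergence estimate: since $I_{s-\frac12}(t)=O\left(t^{s-\frac12}\right)$ as $t\to0$, the seed is $O\left(\im(Mz)^{s}\right)$, so the series is dominated by $y^{\re(s)}\sum_{(c,d)}|cz+d|^{-2\re(s)}$ and converges only for $\re(s)>1$, not $\re(s)>\frac12$; your exponent $-2\re(s)-1$ carries a spurious $-1$. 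This is not cosmetic: if the series really converged for $\re(s)>\frac12$ there would be nothing to continue, whereas the divergence of $\sum|cz+d|^{-2}$ at $s=1$ is exactly why the continuation must be run through the Fourier expansion.

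Second, your displayed unfolded integral drops the oscillatory factor coming from $e(-n\re(Mz))$. Writing $Mz=\frac ac-\frac{1}{c^2(\xi+iy+d/c)}$, only the piece $e(-na/c)$ enters the Kloosterman sum; the factor $e\left(n\re\left(\frac{1}{c^2(\xi+iy)}\right)\right)$ must stay inside the integrand. As displayed, without that phase, the integral is not the Hankel-type transform that yields $I_1\left(4\pi\sqrt{mn}/c\right)$ and $J_1\left(4\pi\sqrt{|m|n}/c\right)$---it is precisely the interference between this phase and $e(-m\xi)$ that produces Bessel functions of argument $4\pi\sqrt{|m|n}/c$. With that factor restored (and with the archimedean factor recorded as $y^{1/2}K_{s-\frac12}(2\pi|m|y)$ for general $s$ before specializing $K_{\frac12}$ to an exponential at $s=1$), the remainder of your plan---termwise continuation justified by the Weil bound in $c$ and exponential decay in $m$---is sound and coincides with the cited proof.
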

	We then define the functions $j_{N,n}(z)$ by
	\begin{align}
j_{N,n}(z):=2\pi\sqrt{n}F_{N, -n}(z).\label{JNnDefinition}
	\end{align}
For $N=1$, we recover the $j_n(z)$ from the introduction up to the constant $2\pi \sqrt{n} c_1(n,0)=24 \sigma_1(n)$.

\subsection{Proofs of Theorems~\ref{AKNGeneralization} and \ref{thm:HtauCusp}}

In order to formally state Theorem \ref{AKNGeneralization} (4), for an arbitrary solution $a,b\in\Z$ to $ad-bc=1$, we define
\begin{align}\label{eqn:rcd}
r_z(c,d)&:=ac|z|^2+(ad+bc)\re(z)+bd,\\
\label{eqn:Lambdazdef}\Lambda_z&:=\left\{\alpha^2 |z|^2 + \beta \re(z) + \gamma^2>0: \alpha,\beta,\gamma\in \Z\right\},\\
\label{eqn:Slambdadef}S_{\lambda}&:=\left\{ (c,d)\in N\N_0\times \Z:\ \gcd(c,d)=1\text{ and }Q_{z}(c,d)=\lambda\right\},\\
\nonumber Q_{z}(c,d)&:=c^2|z|^2+2cd\re(z)+d^2.
\end{align}
Note that although $r_{z}(c,d)$ is not uniquely determined, $e(-nr_{z}(c,d)/Q_z(c,d))$ is well-defined.
\begin{proof}[Proof of Theorem \ref{AKNGeneralization}]
(1) For $m\in\N$, inspecting the expansions in Propositions \ref{prop:Psiexp} and \ref{Niebur} yields that $2\pi \sqrt{m}F_{N, -m}(z)$ is the coefficient of $e^{2\pi im\tau}$ in $-\im(z)\Psi_{2,N}(\tau,z)/(2\pi)$, yielding the claim.
\noindent

\noindent
(2)  Since $\gcd(N,n)=1$,  $T(n)$ commutes with the action of $\Gamma_0(N)$, and so it suffices to show that (by analytic continuation)
$f_n(z)=f_1(z)\mid T(n),$ where
$$
f_n(z)=f_{n,s}(z):= e\left(-n\re (z)\right)\left(n\im(z)\right)^{\frac12} I_{s-\frac12} \left(2\pi n\im(z)\right).
$$
Since $f_n(z)$ is a (nonholomorphic) Fourier coefficient, we use the formula ($\gcd(n,N)=1$)
\begin{equation}\label{eqn:Hecke}
f(z)\mid T(n)=n\sum_{m\in\Z}\sum_{d\mid \gcd(m,n)}\frac{a_{\frac{d^2}{n}\im(z)}\left(\frac{mn}{d^2}\right)}{d} e^{2\pi imz},
\end{equation}
if $f(z)=\sum_{m\in\Z}a_{\im(z)}(m)e^{2\pi imz}$ is a nonholomorphic modular function of weight $0$.
Write
	$f_n(z)=f_n^*(\im(z))e^{-2\pi inz}$ with
	$f_n^*(y):=(ny)^{\frac12}I_{s-\frac12}(2\pi ny)e^{-2\pi ny}.$
The $m$th  coefficient in \eqref{eqn:Hecke} vanishes unless $m=-n$. Moreover, only $d=n$ contributes, giving
\[
f_1(z)\mid T(n)=f_n^*\left(n\im(z)\right)e^{-2\pi inz}=\left(n\im(z)\right)^{\frac12}I_{s-\frac12}\left(2\pi n\im(z)\right)e^{-2\pi i n\re (z)}=f_n(z).
\]

\noindent
(3)  For $n\mid N$, we rewrite
\[
\sum_{M\in\Gamma_\infty\setminus\Gamma_0(N)}f_n(Mz)=\sum_{M\in\Gamma_\infty\setminus\Gamma_0(N)} f_1(nMz).
\]
Now, with $M=\begin{psmallmatrix}a&b\\c&d \end{psmallmatrix}\in\Gamma_0(N)$, we have
$
nMz=\frac{anz+bn}{\frac{c}{n}nz+d}
$
and $\begin{psmallmatrix}a&bn\\ \frac{c}{n}&d \end{psmallmatrix}$ runs through $\Gamma_\infty\setminus\Gamma_0(\frac{N}{n})$ if $M$ runs through $\Gamma_\infty\setminus\Gamma_0(N)$, implying the claim for $n\mid N$.

\noindent
(4)  We first rewrite the claimed asymptotic formula in terms of the corresponding points $Mz$ with $M=\begin{psmallmatrix}a&b\\c&d\end{psmallmatrix} \in\Gamma_{\infty}\backslash \Gamma_0(N)$.   Directly plugging in and simplifying yields $r_z(c,d)/Q_z(c,d) = \re(Mz)$ and $\im(z)/Q_z(c,d)=\im(Mz)$, so the claim in Theorem \ref{AKNGeneralization} (4) is equivalent to
\begin{equation}\label{eqn:jasymp}
j_{N,n}(z)= \sum_{\substack{M\in \Gamma_{\infty}\backslash \Gamma_0(N)\\ n\im(Mz)\geq \im(z)}} e^{-2\pi in Mz}+O_{z}(n).
\end{equation}

\noindent
In order to show \eqref{eqn:jasymp}, we only expand the Fourier expansion for large $c$.  That is to say, we write
\begin{equation}\begin{split}\label{eqn:jexp}
j_{N,n}(z)&=2 \sum_{\substack{1\leq c\leq \frac{\sqrt{n}}{\im(z)}\\ N\mid c}} \sum_{\substack{d\in\Z\\ \gcd(c,d)=1}} e(-n\re(Mz))\sinh(2\pi n\im(Mz))\\
&\quad+2\pi \sqrt{n}\sum_{\substack{c>\frac{\sqrt{n}}{\im(z)}\\ N|c}}\sum_{m\geq 1}\frac{K(m,-n;c)}{\sqrt{m}c}I_1\left(\frac{4\pi\sqrt{mn}}{c}\right)e^{2\pi imz} +4\pi^2n\sum_{\substack{c>\frac{\sqrt{n}}{\im(z)}\\ N|c}}\frac{K(0,-n;c)}{c^2 }\\
&\quad+2\pi \sqrt{n}\sum_{\substack{c>\frac{\sqrt{n}}{\im(z)}\\ N|c}}\sum_{m\geq 1}\frac{K(-m,-n;c)}{\sqrt{m}c}J_1\left(\frac{4\pi\sqrt{mn}}{c}\right)e^{-2\pi im\overline{z}}.
\end{split}\end{equation}

In order to obtain \eqref{eqn:jasymp}, we split the main terms with $n\im(Mz)\geq \im(z)$ off and rewrite
\begin{equation}\label{eqn:mainasym}
2\sinh(2\pi n\im(Mz))= e^{2\pi n\im(Mz)}-e^{-2\pi n\im(Mz)}.
\end{equation}
The second term above is obviously bounded.
Since
$$
\im(z)\leq n\im(Mz)=\frac{n\im(z)}{c^2\im(z)^2+(d+c\re (z))^2}
$$
implies that $c\leq \sqrt{n}/\im(z) \ll_{z} \sqrt{n}$ and $|d|\leq |c\re (z)|+\sqrt{n\im(z)}\ll_z \sqrt{n}$, the contribution to the error from the sum of the second terms in \eqref{eqn:mainasym} yields an error of at most $O_z(n)$.

For the second, third, and fourth sums in \eqref{eqn:jexp}, we use the Weil bound for Kloosterman sums
\begin{equation}\label{eqn:WeilBound}
|K(m,-n;c)|\leq \sqrt{\gcd(m,n,c)}\sigma_0(c)\sqrt{c}\ll \begin{cases} \sqrt{n}c^{\frac12+\varepsilon}&\text{if }m=0,\\
\sqrt{|m|} c^{\frac{1}{2}+\varepsilon} &\text{if }m\neq 0.
\end{cases}
\end{equation}
For the third sum in \eqref{eqn:jexp}, this gives
\begin{equation}\label{eqn:expm=0}
2\pi\sqrt{n}\sum_{\substack{c>\frac{\sqrt{n}}{\im(z)}\\ N|c}}\frac{K(0,-n;c)}{c^2 }\ll n \sum_{\substack{c>\frac{\sqrt{n}}{\im(z)}\\ N|c}} c^{-\frac{3}{2}+\varepsilon}\ll_{z} n^{\frac{3}{4}+\varepsilon}.
\end{equation}
Next note that for $x\geq 0$ we have $|J_1(x)|\leq I_1(x)$ by their series expansions. Since $x\mapsto \frac{I_1(x)}{x}$
is monotonically increasing and grows at most exponentially, the contribution from the second and fourth terms in \eqref{eqn:jexp} may be bounded by, using \eqref{eqn:WeilBound},
\begin{equation}\label{eqn:expm>0}
\begin{split}
&\ll \sum_{\substack{c>\frac{\sqrt{n}}{\im(z)}\\ N|c}}\sum_{m\geq 1}\frac{|K(\pm m,-n;c)|}{\sqrt{m}c}I_1\left(\frac{4\pi\sqrt{mn}}{c}\right)e^{-2\pi m\im(z)}\\
&\ll \sqrt{n}\sum_{\substack{c>\frac{\sqrt{n}}{\im(z)}\\ N|c}}\sum_{m\geq 1}\frac{|K(\pm m,-n;c)|}{c^2}\frac{I_1\left(4\pi \im(z)\sqrt{m}\right)}{4\pi \im(z)\sqrt{m}}e^{-2\pi m\im(z)} \\
&\ll \sqrt{n} \sum_{m\geq 1}  I_1\left(4\pi \im(z)\sqrt{m}\right) e^{-2\pi m\im(z)}\ll \sqrt{n}.
\end{split}\end{equation}

It remains to bound the terms in the first sum in \eqref{eqn:jexp} with $|cz+d|^2>n$.  Since each term gives a constant contribution, the terms with $|d|< \sqrt{n}+|c\re (z)|$ give an error term of at most $O_z(n)$.

We finally assume that $|d|\geq \sqrt{n}+|c\re(z)|$.  Since
$x\mapsto\frac{\sinh(x)}{x}$ is monotonically increasing and $|cz+d|^2>n$, the remaining terms contribute 
\begin{multline*}
\left|\sum_{\substack{c\leq \frac{\sqrt{n}}{\im(z)} \\ N|c}}\sum_{\substack{|d|\geq \sqrt{n}+|c\re(z)|\\ \gcd(c,d)=1, }}e\left(-n\re \left(Mz\right)\right)\sinh\left(2\pi n\im \left(Mz\right)\right)\right|
\leq\sum_{c\leq \frac{\sqrt{n}}{\im(z)}}\sum_{|d|\geq \sqrt{n}+|cx|
}\sinh\left(\frac{2\pi n\im(z)}{|cz+d|^2}\right) \\
\leq\sum_{c\leq \frac{\sqrt{n}}{\im(z)}}\sum_{|d|\geq \sqrt{n}}\sinh\left(\frac{2\pi n\im(z)}{d^2}\right)
\leq 2\pi\sqrt{n}  \sum_{d\geq\sqrt{n}}\frac{ n}{d^2} \frac{\sinh\left(2\pi \im(z)\right)}{2\pi \im(z)} =O_z(n),
\end{multline*}
This implies that the terms in the first sum in \eqref{eqn:jexp} with $|cz+d|^2>n$ contribute $O_z(n)$.
\end{proof}

\begin{proof}[Proof of Theorem \ref{thm:HtauCusp}]
(1)
Let $K_s$ denote the usual $K$-Bessel function.
Expanding $F_{N, -n,s}(z)$ at the cusp $\rho$ as in Section 3.4 of \cite{Iwaniec}, we obtain
\begin{align*}
	F_{N, -n,s}\left(M_\rho z\right)
	&\hphantom{:}=
 \frac{c_{\rho,s}(n,0)}{2s-1}\left(\im(z)\right)^{1-s}
\!+\!\sum_{m\in\Z\setminus\{0\}}c_{\rho,s}(n,m)e^{2\pi im\frac{\re(z)}{\ell_\rho}}\!\left(\im(z)\right)^{\frac{1}{2}}K_{s-\frac{1}{2}}\!\left(\frac{2\pi|m|\im(z)}{\ell_{\rho}}\right),
 \\\text{ with  }
	c_{\rho,s}(n,m)
	&:= \sum_{c\geq 1 }K_{i\infty,\rho}(m,-n;c)
\times\begin{cases}\frac{2}{c\sqrt{\ell_{\rho}}} I_{2s-1}\left(\frac{4\pi\sqrt{mn}}{\ell_\rho c}\right) & \text{if $m> 0$,}\\
	\frac{2\pi^{s} n^{s-\frac{1}{2}}}{\ell_\rho^s c^{2s}\Gamma(s)}
& \text{if $m= 0$,}\\
	\frac{2}{c\sqrt{\ell_{\rho}}} J_{2s-1}\left(\frac{4\pi\sqrt{|m|n}}{\ell_\rho c}\right) & \text{if $m< 0$,}
	\end{cases}
\end{align*}
The right-hand side is analytic at $s=1$, which gives the expansion of $ F_{N, -n}(z)$ at $\rho$.
Plugging in $K_{\frac12}(y)= \sqrt{\frac{\pi}{2y}}e^{-y}$ and taking the limit $z\to i\infty$, we obtain
\begin{equation}\label{eqn:jNnrho}
j_{N,n}(\rho)=2\pi\sqrt{n}\lim_{s\rightarrow 1^+}c_{\rho,s}(n,0)=\frac{4\pi ^2 n}{\ell_\rho}\sum_{c\geq 1}\frac{K_{i\infty,\rho}(0,-n;c)}{c^2}.
\end{equation}
We have $K_{i\infty,\rho}(0,-n;c)=K_{\rho, i\infty}(n,0;c)$, since $M=\left(\begin{smallmatrix}a&b\\c&d\end{smallmatrix}\right)$ runs through $\Gamma_0(N)M_\rho / \Gamma_\infty ^{\ell_\rho}$ iff $-M^{-1}=\left(\begin{smallmatrix}-d&b\\c&-a\end{smallmatrix}\right)$ runs through $\Gamma_\infty ^{\ell_\rho}\backslash M_\rho ^{-1}\Gamma_0(N)$ in \eqref{Krhodef}. Hence \eqref{HrhoE2rho} 
yields the claim.
\noindent

\noindent
(2) and (3)  follow by taking limits $\tau\rightarrow\rho$ in Theorem \ref{AKNGeneralization} (2) and (3), respectively.  Using the growth in $n$ of $j_{N,n}(\rho)$ from \eqref{eqn:jNnrho}, these limits may be taken termwise.

\end{proof}

\section{Proof of Theorem~\ref{divN}}\label{ProofdivN}
\begin{proof}[Proof of Theorem \ref{divN}]

We show that the difference of both sides has no poles in $\H$ and decays towards the cusps. We start by considering the points in $\H$. One easily computes that the residue of $-\frac{\Theta\left(f(\tau)\right)}{f(\tau)}$ at $\tau=z$ equals $\frac{1}{2\pi i}\ord_z(f)$. Using \eqref{PP} gives that the principal part 
at $z$ agrees. In a cusp $\rho$ one similarly sees that $\frac{\Theta\left(f(\tau)\right)}{f(\tau)}$ has no pole and its constant term equals $\ord_\rho(f)$. Using that the constant term of $H_{N,z}^*(\tau)$ at $\rho$ is $-1$ then gives the claim.
\end{proof}

\end{document}